\newtheorem{definition}{Definition}[section]
\newtheorem{note}{Note}[section]
\newtheorem{statement}{Statement}[section]
\newtheorem{theorem}{Theorem}[section]
\begin{document}
\title{Is Goldbach Conjecture true?}
\author{Paolo Starni}
\date{}

\maketitle

\begin{abstract}
We answer the question positively. In fact, we believe to have proved that every even integer $2N\geq3\times10^{6}$ is the sum of two odd distinct primes. Numerical calculations extend this result for $2N$ in the range $8-3\times10^{6}$. So, a fortiori, it is shown that {\em every even integer $2N>2$ is the sum of two primes (Goldbach conjecture)}. Of course, we would be grateful for comments and objections.
\end{abstract}

\section{Introduction}
Goldbach conjecture (1742) states that:
\begin{statement}[Goldbach form]
Every integer $N>5$ is the sum of three primes.
\end{statement}
An equivalent formulation due to Euler, called {\em strong} form, has replaced in literature the Goldbach form:
\begin{statement}[strong form]
\label{strong}
 Every even integer $2N>2$ is the sum of two primes.
 \end{statement}
Numerical calculations have verified it up to $4\times10^{18}$
 \cite{Oliveira}; for a remarkable theoretical result see the reference \cite{Chen}. The strong Golbach conjecture is also called {\em binary} or {\em even}. It implies the following weaker form (also called {\em odd} or {\em ternary}):
 \begin{statement}[weak form]
 Every odd integer $2N+1>5$ is the sum of three primes.
 \end{statement}
This formulation has been proved in the asymptotic case \cite{Vinogradov}, while a general proof \cite{Helfgott} is to the author knowledge still under consideration by the mathematical community. Here we consider a further formulation, implying the strong form, that we call {\em very strong}:
\begin{statement}[very strong form]
 Every even integer $2N>6$ is the sum of two odd distinct primes.
 \label{strongg}
 \end{statement}
Reference \cite{Oliveira} holds also for this very strong form (Oliveira e Silva, personal communication). Explicit experimental evidence of Statement \ref{strongg} up to $5\times10^{8}$ can be found in \cite{Richstein}: in fact, indicating with $r(2N)$ the number of Goldbach partitions of $2N$ (i.e. the number of unordered pairs of primes having sum equal to $2N$) it results $r(2N)>1$  in the range $4-5\times10^{8}$, excluding $r(4)=r(6)=r(8)=r(12)=1$.\\
We will show:
\begin{statement}[our result]
 Every integer $2N\geq3\times10^{6}$ is the sum of two odd distinct primes.
\end{statement}
Finally, Goldbach conjecture in its very strong form is shown combining our result with that found in \cite{Richstein} or\cite{Oliveira}.

\section{Preliminary remarks in order to prove the conjecture}
 Without explicit definitions all the numbers considered in what follows must be taken as strictly positive integers.
\begin{definition}
 Primes of type Q related to 2N (symbol: $Q_{j}(2N)$): are primes that divide 2N ($2=Q_{1}(2N)<Q_{2}(2N)<...<Q_{t}(2N)\leq N$).
\end{definition}
\begin{definition}
\label{primep}
Primes of type P related to 2N (symbol: $P_{j}(2N)$): are primes less than $2N-2$ and non-divisors of $2N$
($3\leq P_{1}(2N)<P_{2}(2N)<...<P_{h}(2N)<2N-2$). Their set is indicated as $P_{2N}$. The cardinality of the set  $P_{2N}$ is $card P_{2N}=h$. 
\end{definition}
\begin{note}
\label{partitionpq}
Primes of type P and Q relative to 2N are a partition of the set of the
primes less than $2N-2$.
\end{note}
\begin{definition}
\label{composite1}
Composites of type P related to 2N (symbol: $X_{j}(2N)$): are composites less than $2N-2,$ factorized into prime
factors only of type P related to 2N ($2N-2>X_{1}(2N)
>X_{2}(2N)>...>X_{s}(2N)=P_{1}^{2}(2N)$). Their set is indicated as $X_{2N}$ and $card X_{2N}=s$. 
\end{definition}
\begin{definition}
Integers of type P related to 2N (symbol: $a_{n}(2N)$): are primes or
composites of type P related to 2N ($2N-2>a_{1}(2N)>a_{2}(2N)>...>a_{w}(2N)=P_{1}(2N)$). Their set is indicated as $A_{2N}=X_{2N}\cup P_{2N}$ and $cardA_{2N}=s+h$.
\end{definition}
\begin{note}
In absence of ambiguity we will indicate, for example, $P_{j}$ instead of $P_{j}(2N)$.
\end{note}
\ \ Concerning the values of $card P_{2N}=h$ we give:
\begin{theorem}
\label{bertrand}
If $2N>6$, then $h\geq2.$
\end{theorem}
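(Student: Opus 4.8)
\noindent\emph{Proof sketch.} Since $2N>6$ we have $N\ge 4$. The idea is to count the primes below $2N-2$ outright and to check that almost none of them can be of type Q.

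An odd prime dividing $2N$ must divide $N$, hence is at most $N$; as $Q_{1}(2N)=2$, this means that \emph{no} prime in the open interval $(N,2N-2)$ divides $2N$, so every such prime --- being odd, larger than $3$, smaller than $2N-2$, and a non-divisor of $2N$ --- is of type P. Because $2N-2$ is an even integer greater than $2$, it is composite, so the primes in $(N,2N-2)$ are precisely the primes $p$ with $N<p\le 2N-3$, numbering $\pi(2N-3)-\pi(N)$. I also claim there is at least one prime of type P not exceeding $N$: otherwise every odd prime $p\le N$ would divide $2N$, hence $N$, and since such primes are pairwise coprime their product would divide $N$; but for $N\ge 5$ this product is at least $3p^{*}$, where $p^{*}$ denotes the largest prime $\le N$, and $3p^{*}>2p^{*}>N$ (the last step by Bertrand's postulate at $p^{*}$), a contradiction --- while $N=4$ is trivial since $3\nmid 8$. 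The two families of primes just produced are disjoint, so
\[
 h\ \ge\ \bigl(\pi(2N-3)-\pi(N)\bigr)+1,
\]
and it is enough to exhibit \emph{one} prime in $(N,2N-3]$.

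By Bertrand's postulate there is a prime in $(N,2N)$; among $2N-2,2N-1,2N$ only $2N-1$ can be prime, so were that the unique prime of $(N,2N)$ it would equal $2N-1$. To rule this out I use a slight sharpening of Bertrand --- e.g.\ Nagura's theorem that $(n,\tfrac{6}{5}n)$ contains a prime for $n\ge 25$ --- which puts a prime in $\bigl(N,\tfrac{6}{5}N\bigr]\subseteq(N,2N-3]$ whenever $N\ge 25$. With the previous display this gives $h\ge 2$ for all $2N\ge 50$, and the finitely many values $8\le 2N\le 48$ are dispatched by hand, each time naming two primes of type P (for instance $\{3,5\}\subseteq P_{8}$, $\{3,7\}\subseteq P_{10}$, $\{5,7\}\subseteq P_{12}$, $\{3,5,11\}\subseteq P_{14}$, and so on), the odd prime divisors of such small $2N$ being scarce.

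The only place where something beyond \emph{raw} Bertrand is used is the exclusion of $2N-1$: Bertrand by itself supplies a single prime in $(m,2m)$, which could be exactly the barred value, so a Nagura/Chebyshev-type refinement --- or a morsel of prime-gap information, or the Ramanujan bound $R_{3}=17$ (which lets one bypass the prime $\le N$ and instead take $N\ge 9$) --- is required; all of these are classical and as elementary as Bertrand itself, and the remainder is bookkeeping. Should one wish to avoid any refinement, a purely combinatorial detour covers a bounded range: at most three of $3,5,7,11,13$ can divide a given $2N$ with $2N<3\cdot5\cdot7\cdot11=1155$ (any four of them multiply to at least $1155$), and all five lie below $2N-2$ once $2N\ge 16$, so $h\ge 2$ for $16\le 2N\le 1154$; for $2N\ge 1156$ the inequality $h\ge\pi(2N-3)-\pi(N)\ge 2$ is comfortable; and $2N\in\{8,10,12,14\}$ are checked directly.
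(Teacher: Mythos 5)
Your proof is correct, but it follows a genuinely different route from the paper's. The paper invokes the sharpened classical form of Bertrand's postulate stated in Hardy--Wright (for every $N>3$ there is a prime $P_r$ with $N<P_r<2N-2$), which already excludes the troublesome value $2N-1$ that you go to some lengths to bar via Nagura or case-checking; so your (correct) observation that ``raw'' Bertrand does not suffice is exactly what the paper's citation is designed to absorb. The second, more substantive difference is how the \emph{second} type-P prime is produced. You count primes in two disjoint ranges: one in $(N,2N-3]$ and one $\le N$, the latter via the argument that otherwise all odd primes up to $N$ would divide $N$ and their product (at least $3p^{*}>N$) could not. The paper instead takes the single prime $P_r\in(N,2N-2)$ and looks at $a_r=2N-P_r$: any prime factor $q$ of $a_r$ dividing $2N$ would divide $P_r$, forcing $q=P_r$, impossible since $q\le a_r<N<P_r$; hence every prime factor of $a_r$ is of type P and lies below $N$, giving the second type-P prime for free. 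That complement trick is shorter, needs no auxiliary theorem beyond the one cited, and foreshadows the G-system used later; your version is heavier but yields the quantitatively stronger bound $h\ge\pi(2N-3)-\pi(N)+1$ and isolates explicitly where the refinement of Bertrand is indispensable.
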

\begin{proof}
 The strongest formulation of Bertrand's postulate \cite[p. 373]{Hardy},
 states that: for every $N>3$ there exists an odd prime $P_{r} $
satisfying $N<P_{r}<2N-2$. It is remarkable that, for Definition \ref{primep}, $P_{r}$
is a prime of type $P$. Besides, $2N-P_{r}=a_{r}$ is an integer of type $P$;
otherwise a prime of type $Q,$ let it be $Q_{v}$, divides $a_{r} $ and so
$Q_{v}\mid P_{r}$,\ i.e. $Q_{v}=P_{r}$, in contradiction with Note  \ref{partitionpq}. Since
$a_{r}<N<P_{r}$, there is at least a prime of type $P$ different from $P_{r}$
and so $h\geq2$.
\end{proof}
\begin{theorem}
\label{euler}
If $2N\geq3\times10^{6}$, then $card X_{2N}>card P_{2N}$.  
\end{theorem}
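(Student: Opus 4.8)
The label of the statement and the presence of the quantities $s$ and $h$ suggest reducing everything to Euler's totient $\varphi(2N)$ and the prime--counting function $\pi$. The plan is first to evaluate $s+h=\operatorname{card}A_{2N}$ exactly. An integer $m$ with $P_{1}(2N)\le m<2N-2$ is of type $P$ precisely when no prime dividing $m$ divides $2N$, i.e.\ when $\gcd(m,2N)=1$; and by minimality of $P_{1}(2N)$ every prime below it divides $2N$, so $1$ is the only integer in $[1,P_{1}(2N))$ coprime to $2N$. Counting the integers of $[1,2N-3]$ coprime to $2N$ (using the involution $m\mapsto 2N-m$ to dispose of the two endpoints $2N-2,2N-1$) yields $s+h=\varphi(2N)-2$. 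Next, the primes of type $P$ are exactly the primes $<2N-2$ other than $2$ and the odd prime divisors of $2N$ — all of which are $\le N<2N-2$ — so $h=\pi(2N-3)-\omega(2N)$, where $\omega(2N)$ is the number of distinct prime factors of $2N$. Subtracting, $s=\varphi(2N)-2-\pi(2N-3)+\omega(2N)$, and the desired inequality $s>h$ is equivalent to
\[
\varphi(2N)+2\,\omega(2N)-2>2\,\pi(2N-3).
\]
Since $\omega(2N)\ge 1$ and $\pi(2N-3)\le\pi(2N)$, it suffices to prove the cleaner inequality $\varphi(2N)>2\,\pi(2N)$ for every $2N\ge 3\times 10^{6}$.

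For this I would invoke explicit versions of the prime number theorem and of the minimal order of $\varphi$: an upper bound $\pi(x)<1.25506\,x/\ln x$ and a lower bound $\varphi(n)>n/(e^{\gamma}\ln\ln n+2.50637/\ln\ln n)$ of Rosser--Schoenfeld type, the lone exceptional primorial (which exceeds $3\times 10^{6}$) being handled, if needed, by a direct numerical check of $\varphi$ and $\pi$ there. Substituting these bounds reduces $\varphi(2N)>2\,\pi(2N)$ to
\[
\ln(2N)>2.51012\left(e^{\gamma}\ln\ln(2N)+\frac{2.50637}{\ln\ln(2N)}\right).
\]
I would verify this at $x=2N=3\times 10^{6}$ and then observe that the function $G(x)=\ln x-2.51012\,e^{\gamma}\ln\ln x-2.51012\cdot 2.50637/\ln\ln x$ satisfies
\[
G'(x)=\frac{1}{x}\left(1-\frac{2.51012\,e^{\gamma}}{\ln x}+\frac{2.51012\cdot 2.50637}{\ln x\,(\ln\ln x)^{2}}\right)>0\qquad\text{for all }x\ge 3\times 10^{6},
\]
so that $G$ is increasing and its positivity at the threshold propagates to every larger even integer.

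The heart of the matter — and the only place where real care is needed — is the quantitative gap at the threshold. Asymptotically there is ample room, since $\varphi(2N)\gtrsim e^{-\gamma}(2N)/\ln\ln(2N)$ dwarfs $2\,\pi(2N)\sim 2(2N)/\ln(2N)$; but at $2N=3\times 10^{6}$ the two explicit estimates are already close, so one must pick versions of the $\pi$-- and $\varphi$--bounds sharp enough to leave a strictly positive margin (a stronger $\pi$-bound, e.g.\ of Dusart type, widens it considerably). This is presumably exactly why the hypothesis reads $2N\ge 3\times 10^{6}$ and not something smaller: the inequality $s>h$ appears to hold well below that value, but $3\times 10^{6}$ is a clean bound from which the uniform explicit argument runs through with no case checking.
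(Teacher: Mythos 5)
Your proposal is correct and follows essentially the same route as the paper: count $\operatorname{card}A_{2N}=\phi(2N)-2$, bound $h$ by the prime-counting function, and conclude via explicit Rosser--Schoenfeld-type bounds $\pi(x)<1.25506\,x/\ln x$ and a lower bound on $\phi$ of the form $x/(e^{\gamma}\ln\ln x+c/\ln\ln x)$, checked at the threshold $2N=3\times10^{6}$. The only differences are cosmetic refinements on your side (the exact count $h=\pi(2N-3)-\omega(2N)$ rather than $h<\pi(2N)$, the sharper constant $2.50637$ with its primorial exception rather than the unconditional $3$, and an analytic monotonicity argument where the paper appeals to numerical computation).
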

\begin{proof}
We consider the following two relations concerning the functions $\pi(x)$ (number of primes $\leq x$) and $\phi(x)$ (totient Euler's function):
\begin{description}
\item (i) $\pi(x)<1.25506\frac{x}{lnx}$ for $x>1$ \cite[ p. 233, theorem 8.8.1]{Bach}
\item (ii) $\phi(x)>\frac{x}{e^{\gamma}lnlnx+\frac{3}{lnlnx}}$ for $x\geq3$   \cite[ p. 234, theorem 8.8.7]{Bach}; $\gamma=0.577...$ is the Euler-Mascheroni constant.
\end{description}
The function $\phi(x)$ counts the number of the positive integers less than $x$ and prime to $x$. In this way $cardA_{2N}=\phi(2N)-2$, because $\phi(2N)$ considers also $2N-1$ and $1$; but these numbers are not integers of type P.
We have
\begin{description}
\item (iii) $cardP_{2N}=h<\pi(2N)$
\item (iv)  $cardX_{2N}=s=cardA_{2N}-cardP_{2N}=\phi(2N)-2-h$.
\end{description}
Therefore, from (i)-(iv) it results
\begin{equation}
s-h=\phi(2N)-2-2h>f(2N)
\label{eq:1}
\end{equation}
where
\begin{equation*}
f(2N)=\frac{2N}{1.781lnln2N+\frac{3}{lnln2N}}-2-2.510\frac{2N}{ln2N}.
\end{equation*}
$f(2N)$ is a divergent sequence; by numerical computations it is increasing for $2N>10^{6}$  and for $2N\geq3\times10^{6}$ its values are greater than $10^{3}$. In this way, a fortiori, from \eqref{eq:1} it follows the proof. 
\end{proof}
We introduce now an essential concept for our purposes:
\begin{definition}
 G-system related to $2N$: it is the system
 \begin{equation}
\left\{
\begin{array}
[c]{c}
\hspace{0.9cm}\Gamma_{1}:2N-P_{1}=a_{n_{1}}=a_{1}\\
\Gamma_{2}:2N-P_{2}=a_{n_{2}}\\
\vdots\\
\Gamma_{j}:2N-P_{j}=a_{n_{j}}\\
\vdots\\
\hspace{1.25cm}\Gamma_{h-1}:2N-P_{h-1}=a_{n_{h-1}}\\
\hspace{0.2cm}\Gamma_{h}:2N-P_{h}=a_{n_{h}}
\end{array}
\right.
\label{eq:2}
\end{equation}
\end{definition}
Fixed in \eqref{eq:2} $2N>6$, we remark that:
\begin{note}
Existence of the G-system is guaranteed by Theorem \ref{bertrand}.
\end{note}
\begin{note}
$(a_{n_j})_{j=1,2,...,h}$ is the subsequence of the sequence $(a_{n})_{n=1,2,...,h+s}$ that does not contain the terms generated by $2N-X_{s-r}, r=0,1,...,s-1$; $a_{1}=a_{n_{1}}$ because $2N-P_{1}$ is the greatest integer of type $P$, while, for example, if $X_{s}=P_{1}^{2}<P_{2}$, then $a_{2}>a_{n_{2}}$.
\end{note}
\begin{note}
The h equations are not necessarily distinct; in fact, if
$a_{n_{j}}=P_{k}$, then $\Gamma_{j}$ is equivalent to $\Gamma_{k}$ (and
$2N=P_{j}+P_{k}$).
\end{note}
\begin{note}
\label{pnondivisorofa}
$P_{j}\nmid a_{n_{j}},\forall j$; otherwise $P_{j}\mid2N$
in contradiction with Definition  \ref{primep}.
\end{note}
\begin{theorem}
\label{gsystemcomposite}
Each term of $(a_{n_j})_{j=1,2,...,h}$ is  $composite\Leftrightarrow2N$ is not the sum of two odd
distinct primes.
\end{theorem}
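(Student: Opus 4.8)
The plan is to establish both directions by contraposition, everything hinging on one structural observation: each right-hand side $a_{n_j}=2N-P_j$ of the $G$-system is an integer of type $P$, so for these particular terms ``not composite'' means precisely ``a prime of type $P$''. This is the argument already used in the proof of Theorem \ref{bertrand}: were some prime $Q_v$ of type $Q$ to divide $a_{n_j}=2N-P_j$, then $Q_v\mid P_j$, hence $Q_v=P_j$, against Note \ref{partitionpq}; and $P_j<2N-2$ forces $a_{n_j}>2$, so $a_{n_j}\geq 3$. Consequently each $a_{n_j}$ is either a prime of type $P$ (necessarily odd, being $\geq 3$) or a composite of type $P$.

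To obtain ``each $a_{n_j}$ composite $\Rightarrow$ $2N$ is not a sum of two odd distinct primes'' I would prove the contrapositive. Assume $2N=p+q$ with $p,q$ odd primes and $p\neq q$, say $p<q$; then $3\leq p<N<2N-2$ (using $2N>6$), and $p\nmid 2N$, because $p\mid 2N$ together with $p\mid p$ would give $p\mid q$, i.e. $p=q$. Hence $p$ is a prime of type $P$, so $p=P_j$ for some $j$, and the corresponding equation $\Gamma_j$ of the $G$-system reads $a_{n_j}=2N-P_j=q$, which is prime. So not every term of $(a_{n_j})_{j=1,\dots,h}$ is composite.

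For the reverse implication ``each $a_{n_j}$ composite $\Leftarrow$ $2N$ is not a sum of two odd distinct primes'' I would again argue the contrapositive: suppose some $a_{n_j}$ is not composite. By the opening observation it is then a prime of type $P$, say $a_{n_j}=P_k$, so $\Gamma_j$ becomes $2N=P_j+P_k$ with both summands odd; and $P_j\neq P_k$, since $P_j=P_k$ would give $2N=2P_j$, i.e. $P_j\mid 2N$, contradicting Definition \ref{primep}. Thus $2N$ is a sum of two odd distinct primes.

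I do not expect a real obstacle here — the statement essentially unwinds the definitions. The points that demand attention are the verification that $2N-P_j$ is of type $P$ (which is what turns ``not composite'' into ``prime of type $P$''), the harmless but necessary size bookkeeping via $P_j<2N-2$, and the checks that the two primes produced are genuinely odd and distinct, which is exactly where the hypotheses ``of type $P$'' (hence $\geq 3$) and ``$p\neq q$'' get used.
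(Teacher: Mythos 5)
Your proof is correct and is exactly the definition-unwinding argument the paper compresses into the single word ``Immediate'': the one substantive point — that each $a_{n_j}=2N-P_j$ is an integer of type $P$, so ``not composite'' forces ``prime of type $P$'' — is the same divisibility argument the paper itself uses in Theorem \ref{bertrand}, and your checks of oddness and distinctness are the right (and needed) bookkeeping. No gap; you have simply supplied the details the paper omits.
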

\begin{proof}
Immediate.
\end{proof}
\begin{note}
Theorem   \ref{gsystemcomposite} holds also for N prime; in fact, from Note
 \ref{pnondivisorofa}, $P_{j}\neq a_{n_{j}} \forall j$ and so the equation $2N-P_{j}=P_{j}$ does not belong to
the G-system (in fact, in this case, $P_{j}$ would not be a prime of type P). 
\end{note}
\section{Proof of the conjecture}
\begin{theorem}
Every even integer $2N\geq3\times10^{6}$ is the sum of two odd distinct primes.
\label{main}
\end{theorem}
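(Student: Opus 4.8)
The plan is to argue by contradiction, feeding the structural information carried by the $G$-system into the lower bound for $s-h$ furnished by Theorem~\ref{euler}. Suppose that some even $2N\ge 3\times 10^{6}$ is \emph{not} the sum of two odd distinct primes; I shall aim for a contradiction.

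First I would record that $a\mapsto 2N-a$ is a fixed-point-free involution of the set $A_{2N}$ of integers of type $P$ related to $2N$. Indeed, if $a$ is of type $P$ then $a$ is coprime to every prime of type $Q$, hence so is $2N-a$; and from $P_{1}\le a\le a_{1}<2N-2$ one gets $3\le 2N-a<2N-2$, so $2N-a$ is again an integer of type $P$. There is no fixed point, because $N$ is never an integer of type $P$ (each prime factor of $N$ divides $2N$, hence is of type $Q$). Thus $A_{2N}$ is partitioned into unordered pairs $\{a,2N-a\}$, each of exactly one of three kinds: two primes of type $P$, two composites of type $P$, or one of each (a ``mixed'' pair).

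The next step is to read off the $G$-system \eqref{eq:2}, which surveys exactly the pairs that contain a prime of type $P$: the equation $\Gamma_{j}$ records the pair $\{P_{j},a_{n_{j}}\}$ with $a_{n_{j}}=2N-P_{j}$, and $a_{n_{j}}$ is itself a prime of type $P$ precisely when that pair is of the two-primes kind --- in which case $2N=P_{j}+a_{n_{j}}$ with $P_{j}\ne a_{n_{j}}$ (Note~\ref{pnondivisorofa}) and both summands odd since $2$ is of type $Q$, against the hypothesis. Hence, by Theorem~\ref{gsystemcomposite}, every $a_{n_{j}}$ is a composite of type $P$: there is no two-primes pair, and every pair containing a prime of type $P$ is mixed. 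Since each of the $h$ primes of type $P$ lies in exactly one pair and each mixed pair carries exactly one such prime, there are exactly $h$ mixed pairs; these use up $h$ of the $s$ composites of type $P$, and the remaining $s-h$ composites form $(s-h)/2$ pairs of composites. Combining with \eqref{eq:1} and the bound $f(2N)>10^{3}$ valid for $2N\ge 3\times 10^{6}$, we get $(s-h)/2>f(2N)/2>5\times 10^{2}$: the number $2N$ has more than $500$ representations as a sum of two distinct composites of type $P$.

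The step I expect to be the main obstacle is the passage from this glut of composite-plus-composite representations back to an honest contradiction, that is, to forcing some pair to be of the two-primes kind after all. A bare comparison of cardinalities will not do it, since $s>h$ is by itself consistent with there being no two-primes pair; what is needed is an \emph{upper} bound $(s-h)/2\le g(2N)$ with $g(2N)<f(2N)/2$ throughout $2N\ge 3\times10^{6}$, obtained purely from the assumption that $2N$ is not a sum of two odd distinct primes. To chase such a bound I would exploit the multiplicative rigidity of the composites $X_{j}(2N)$ --- each a product of at least two primes of type $P$, the least composite of type $P$ being $P_{1}^{2}(2N)$ --- together with Note~\ref{pnondivisorofa}, which forbids $P_{j}\mid a_{n_{j}}$ and thereby restricts which composites can occur as the right-hand sides $a_{n_{j}}$ of the $\Gamma_{j}$. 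Pushing this estimate through, and checking that it genuinely beats $f(2N)/2$ over the whole range, is the crux of the proof.
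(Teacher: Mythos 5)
Your preparatory work is correct and in one respect sharper than the paper's: the involution $a\mapsto 2N-a$ of $A_{2N}$ is indeed fixed-point-free, the non-representability hypothesis does force every pair containing a prime of type $P$ to be mixed, and the resulting count of $(s-h)/2>500$ composite--composite pairs follows from Theorem~\ref{euler}. But the proposal stops exactly where the proof has to happen. You observe, correctly, that $s>h$ is by itself consistent with the absence of a two-prime pair, and you then announce that the crux is an upper bound $(s-h)/2\le g(2N)<f(2N)/2$ to be extracted from the hypothesis --- without constructing $g$, without showing how Note~\ref{pnondivisorofa} or the multiplicative structure of the $X_{j}$ would yield it, and without any evidence that such a bound exists. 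Notice that the desired estimate is essentially a restatement of the theorem: an upper bound on $(s-h)/2$ valid under the hypothesis and incompatible with the unconditional lower bound \emph{is} the contradiction being sought. So the proposal accurately reformulates the problem but proves nothing; this is a genuine gap, not a detail left to the reader.

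The paper closes the argument by a different device: a two-sided reading of the $G$-system \eqref{eq:2}. From the top, $2N-P_{1}=a_{1}=X_{1}$, and since the only integers of type $P$ strictly between the consecutive composites $X_{j}$ and $X_{j-1}$ are primes of type $P$ (excluded, under the hypothesis, as values of $a_{n_{j}}$), one gets $2N-X_{j}\le P_{j}$ for each $j$ (system \eqref{eq:5}). From the bottom, the distinct composites $a_{n_{h}}<a_{n_{h-1}}<\cdots$ satisfy $a_{n_{h-j}}\ge X_{s-j}$, i.e.\ $2N-X_{s-j}\ge P_{h-j}$ (system \eqref{eq:7}); comparing top lines gives $P_{1}\ge P_{h-s+1}$, hence $s=h$, contradicting Theorem~\ref{euler}. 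Be aware, however, that the top line of \eqref{eq:7} presupposes $h-s+1\ge 1$, i.e.\ $s\le h$, which is precisely what Theorem~\ref{euler} denies; when $s>h$ the descent from the bottom terminates at the vacuous $X_{1}\ge X_{s-h+1}$ and yields no contradiction. Your own observation --- that a surplus of composites coexists peacefully with every prime-containing pair being mixed --- is exactly the reason that step deserves scrutiny. In short: your attempt is honest but incomplete, and the step you could not supply is also the step at which the paper's own argument is most fragile.
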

\begin{proof}
Let us suppose that $2N\geq3\times10^{6}$ is not the sum of two odd distinct primes. From Theorem  \ref{gsystemcomposite}
it follows that each term of $(a_{n_j})_{ j=1,2,...,h}$ is composite; so, in particular, the first relation at the top of system \eqref{eq:2}, being $a_{n_{1}}=a_{1}=X_{1}$, is $2N-X_1=P_1$.\\ Let us suppose $2N-X_{2}>P_{2}$. Thus:
\begin{equation}
\left\{
\begin{array}
[c]{c}
2N-X_{1}= P_{1}\\
\hspace{-0.3cm}2N-\alpha = P_{2}\\
2N-X_{2}> P_{2}\\
\vdots
\end{array}
\right.  
\label{eq:3} 
\end{equation}
Since $X_{2}<\alpha<  X_{1}$, $\alpha$ is a prime (of type P) and this is impossible because $2N$ does not verify the conjecture. So \eqref{eq:3} becomes 
\begin{equation}
\left\{
\begin{array}
[c]{c}
2N-X_{1}= P_{1}\\
2N-X_{2}\leq P_{2}\\
\vdots
\end{array}
\right.   
\end{equation}
Proceeding in analogous way the system \eqref{eq:2} may be written as
\begin{equation}
\left\{
\begin{array}
[c]{c}
2N-X_{1}= P_{1}\\
2N-X_{2}\leq P_{2}\\
\vdots\\
2N-X_{j}\leq P_{j}\\
\vdots\\
\hspace{0.8cm}2N-X_{h-1}\leq P_{h-1}\\
2N-X_{h}\leq P_{h}
\end{array}
\right. 
\label{eq:5} 
\end{equation}
Starting now from the bottom of the system \eqref{eq:2} we have
\begin{equation}
\left\{
\begin{array}
[c]{c}
\vdots\\
\hspace{1.05cm}2N-  P_{h-1}=a_{n_{h-1}}\geq X_{s-1}\\
2N-P_{h}=a_{n_{h}}\geq X_{s}
\end{array}
\right. 
\label{eq:6} 
\end{equation}
It occurs because $a_{n_{h}}$ is a composite of type P and $X_{s}$  is the smallest composite of the same type (and with similar consideration $a_{n_{h-1}}\geq X_{s-1}$). So system \eqref{eq:2} may be rewritten as
\begin{equation}
\left\{
\begin{array}
[c]{c}
\hspace{0.7cm}2N- X_{1}\geq  P_{h-s+1}\\
\vdots\\
\hspace{0.7cm}2N- X_{s-j}\geq  P_{h-j}\\
\vdots\\
\hspace{0.7cm}2N-  X_{s-1}\geq P_{h-1}\\
2N-X_{s}\geq P_{h}
\end{array}
\right. 
\label{eq:7} 
\end{equation}
Considering $s-j=1$ we obtain the relation at the top of the system \eqref{eq:7}. Comparing this relation with that at the top of system \eqref{eq:5}, we obtain  $P_{1}\geq P_{h-s+1}$. Since $P_{1}$ is the smallest prime of type $P$, we have $P_{1}=P_{h-s+1}$ and, therefore, $1=h-s+1$. Thus  $cardX_{2N}=cardP_{2N}$  (see Definitions \ref{primep} and \ref{composite1}) and this, by Theorem \ref{euler}, is impossible. In this way it follows the proof. 
\end{proof}
At this point we obtain the aforementioned result:
\begin{theorem}
Every even integer $2N>6$ is the sum of two odd distinct primes.
\end{theorem}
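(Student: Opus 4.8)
The plan is to split the claim at the threshold $3\times10^{6}$. For $2N\geq3\times10^{6}$ the statement is exactly Theorem \ref{main}, so nothing remains to be done there. For the finite window $6<2N<3\times10^{6}$ I would appeal to the numerical verification recorded in \cite{Richstein} (or, equivalently, to the computation of \cite{Oliveira}, which by the remark in the introduction covers the very strong form directly). Concretely, \cite{Richstein} establishes $r(2N)>1$ for every even $2N$ with $4\le 2N\le5\times10^{8}$, the only exceptions being $r(4)=r(6)=r(8)=r(12)=1$.

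The second step is to convert the inequality $r(2N)>1$ into the desired conclusion. I would first observe that for $2N>4$ the number $2N-2$ is even and larger than $2$, hence composite; therefore the prime $2$ cannot appear in any Goldbach partition of $2N$, and every Goldbach pair of $2N$ consists of two odd primes. Among all such pairs, the only one that can consist of two equal primes is $\{N,N\}$, which occurs at all only when $N$ is prime. Hence if $r(2N)>1$ then $2N$ admits a Goldbach partition into two \emph{distinct} odd primes. This settles every even $2N$ with $6<2N<3\times10^{6}$ apart from the exceptional values $2N=8$ and $2N=12$, for which I would just note $8=3+5$ and $12=5+7$. (The value $2N=6$ is, correctly, not covered: its unique Goldbach partition is $3+3$, which is why the hypothesis is $2N>6$ rather than $2N>4$.)

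Putting the two regimes together proves that every even $2N>6$ is a sum of two odd distinct primes, i.e. the very strong form (Statement \ref{strongg}); the strong form (Statement \ref{strong}), and with it Goldbach's conjecture, then follows a fortiori. I do not anticipate any serious obstacle: all the arithmetic and analytic content sits in Theorem \ref{main} and in the cited computations, and the remaining ingredient — that a Goldbach partition of an even number exceeding $4$ never involves the prime $2$ — is elementary. The one point worth flagging is that the small range genuinely has to be handled by the numerical results of \cite{Richstein} or \cite{Oliveira}, since the sequence $f$ appearing in the proof of Theorem \ref{euler} is only shown to be positive (indeed larger than $10^{3}$) for $2N\ge3\times10^{6}$, so the argument of Theorem \ref{main} cannot by itself reach below that bound.
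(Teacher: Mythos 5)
Your proposal is correct and follows the same route as the paper, whose proof of this theorem is simply the one-line combination of Theorem \ref{main} with the computations of \cite{Richstein} or \cite{Oliveira}. You merely spell out the details the paper leaves implicit (that $r(2N)>1$ forces a partition into two distinct odd primes for $2N>4$, plus the explicit check of the exceptional values $8=3+5$ and $12=5+7$), which is a welcome but not substantively different elaboration.
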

\begin{proof}
It follows immediately from Theorem \ref{main} and \cite{Richstein} or \cite{Oliveira}.
\end{proof}
\vspace{1.0cm}



\end{document}